\documentclass[12pt]{article}
\usepackage{amsmath,amsthm,amssymb,hyperref, color}
\usepackage{fullpage}

\newtheorem{thm}{Theorem}[section]

\newtheorem{lem}[thm]{Lemma}
\newtheorem{define}[thm]{Definition}

\def\F{{\mathbb{F}}}

\def\Z{{\mathbb{Z}}}

\def\C{{\mathbb{C}}}

\def\P{{\mathbb{P}}}

\newcommand{\ip}[2]{\langle #1,#2 \rangle}

\def\_{\,\,\,\,\,}

\def\rank{\textsf{rank}}

\newcommand{\remove}[1]{}

\begin{document}

\title{Rank of incidence matrices over integers modulo a prime power}

\author{Zeev Dvir\thanks{Department of Mathematics  and Department of Computer Science,
Princeton University.
Email: \texttt{zdvir@princeton.edu}. Research supported by NSF grant DMS-2246682.}}

\date{}
\maketitle

\begin{abstract}
In this note we prove an upper bound on the $\F_p$-rank of the incidence matrix of points and hyperplanes in $(\Z/p^k \Z)^n$, improving a recent bound of Laba and Trainer when $k$ is large.
\end{abstract} 

%\pagenumbering{arabic}

%%%%%%%%%%%%%%%%%%%%%%%%%%%%%%%%%%%%%%%%%%%%%%%%%5
\section{Introduction}
%%%%%%%%%%%%%%%%%%%%%%%%%%%%%%%%%%%%%%%%%%%%%%%%%5

Let $\F_p$ be a finite field of prime order $p$. We denote by $R = \Z/p^k\Z$ the ring of integers modulo $p^k$. We define the projective space $\P (R)^{n-1}$ to be the set of vectors $v \in R^n$ with at least one unit coordinate modulo the relation $v \sim u$ given by scaling by a unit in $R$. We can pick unique representatives for $\P(R)^{n-1}$ in $R^n$ (for example, scaling the first unit coordinate to $1$) and so think of $\P(R)^{n-1}$ as a subset of $R^n$. For two vectors $v,u \in R^n$ we define the inner product $\ip{u}{v} = \sum_{i=1}^n v_iu_i \mod p^k$.  We denote the $\F_p$-rank of an integer matrix $M$ by  $\rank_p(M)$.

For $b \in \P(R)^{n-1}$ and $\lambda \in R$ we define the hyperplane
$$ H_{b,\lambda} = \{ v \in R^n \,|\, \ip{v}{b}=\lambda \} .$$
Let  $A(p^k,n)$ be the matrix whose rows are the indicator vectors of all hyperplanes $H_{b,\lambda}$. Formally, the rows of $A(p^k,n)$ are indexed by $\P(R)^{n-1} \times R$ and the columns are indexed by $R^n$ and the entry in position $( (b,\lambda), v )$ is equal to $1$ if $\ip{b}{v}=\lambda$ and is equal to $0$ otherwise. We will be interested in giving an upper bound on the $\F_p$-rank of this matrix. The first such non-trivial upper bound was given by Laba and Trainor \cite{LabaTrainor2025GeneralizedPolynomials}. They actually define a slightly different matrix $A^*(p^k,n)$ in which the rows are indexed by $\P(R)^{n-1} \times R^n$, the columns are indexed by $R^n$ and the entry in position $((b,a),v)$ is 1 iff $\ip{b}{v-a} = 0$. However, the ranks of the two matrices are identical as $A^*(p^k,n)$ has the same rows as $A(p^k,n)$ with each row repeated multiple times.

The rank bounds proven in \cite{LabaTrainor2025GeneralizedPolynomials} are summarized below
\begin{thm}[\cite{LabaTrainor2025GeneralizedPolynomials}]
Let $A(p^k,n)$ be defined above. Then
\[  \rank_p(A(p^k,n)) \leq \min\left\{ {p^k + n -1 \choose n}, (2n){\lfloor \frac{p^k}{2} \rfloor +(n-1)(p-1) +n \choose n} \right\} 
	\]
\end{thm}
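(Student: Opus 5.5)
The plan is the polynomial method: realize every row of $A(p^k,n)$ as the evaluation vector on $R^n$ of a polynomial in a small, fixed space, and bound the rank by the dimension of that space.

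\textbf{First bound.} Identify $R=\{0,1,\dots,p^k-1\}$ and consider functions $f:R\to\F_p$. Every such function is given by a polynomial with rational coefficients that maps integers to $p$-integral values, namely
$$f(x)=\sum_{j<p^k} c_j\binom{x}{j},\qquad c_j\in\F_p,$$
with $\binom{x}{j}$ reduced mod $p$. This follows from Newton interpolation with finite differences, which is unitriangular in the basis $\binom{x}{j}$. Apply this to $\delta_0(x)=\mathbf{1}[x\equiv 0 \bmod p^k]$ on $R$. The row of $(b,\lambda)$ is $v\mapsto \delta_0(\ip{b}{v}-\lambda)$. A caveat: $\ip{b}{v}-\lambda$ must be taken as a residue mod $p^k$, while $\binom{x}{j}$ for $j<p^k$ is $p^k$-periodic mod $p$ only by Lucas' theorem. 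Lucas' theorem gives exactly this: $\binom{x}{j}\bmod p$ depends only on the base-$p$ digits of $x$ below position $k$ when $j<p^k$. Hence $g(x)=\sum_j c_j\binom{x}{j}$ is a well-defined function on $\Z/p^k\Z$ and can be composed with integer lifts.

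Next expand $\binom{\sum_i b_iv_i-\lambda}{j}$ using the Vandermonde identity and $\binom{b x}{m}=\sum_{l}(\text{integer coefficient})\binom{x}{l}$ with $l\le m$. Both are integer-coefficient identities between integer-valued polynomials, so they hold mod $p$. Each row therefore lies in the span of the functions $\prod_i \binom{v_i}{l_i}$ with $\sum l_i\le p^k-1$. Each such product is itself $p^k$-periodic in every $v_i$ by Lucas, since each $l_i<p^k$. The number of such products is $\binom{p^k-1+n}{n}$, which gives the first term.

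\textbf{Second bound.} Here the idea is to exploit the structure of $\delta_0$ more cleverly. By Lucas, $\delta_0(x)=\prod_{t<k}\mathbf{1}[x_t=0]$, where $x_t$ are the base-$p$ digits, and $\mathbf{1}[x_t=0]=1-x_t^{p-1}$ mod $p$. Write $x_t$ through $\binom{x}{p^t}$ mod $p$. The factor $2n$ and the $\lfloor p^k/2\rfloor$ suggest a splitting: use the symmetry $x\mapsto -x$, or cover the row set by $2n$ pieces, for instance according to which coordinate of $b$ is a unit ($n$ choices) and a further twofold case split. On each piece one would represent $\delta_0$ by a polynomial of degree about $p^k/2+(n-1)(p-1)$ in suitably reparametrized variables, with degrees counted with respect to the $\binom{\cdot}{j}$ basis as in the first bound. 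One concrete way to get the degree down to about $p^k/2$ is to write $\delta_0(x)=\sum_y \mathbf{1}[\,\cdot\,]$ as a product of a low-degree test on the high digits and a low-degree test on the low digits, using $\binom{x}{j}\binom{-x}{j'}$ combinations.

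\textbf{Main obstacle.} The main difficulty is this second bound: finding an $\F_p$-valued representation of $\delta_0(\ip{b}{v}-\lambda)$ whose degree, in the Lucas-weighted sense, is about $p^k/2$ rather than $p^k$. The carries in $\ip{b}{v}$ account for the $(n-1)(p-1)$ excess. I would first try to handle the carries by adding $n-1$ auxiliary digit variables, each of degree at most $p-1$. Note that this is the paper's cited theorem from \cite{LabaTrainor2025GeneralizedPolynomials}, so in context it would simply be invoked.
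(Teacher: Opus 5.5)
\textbf{What is proved.} Your argument for the first term of the minimum is correct. Newton interpolation gives $\delta_0(x)=\sum_{j<p^k}(-1)^j\binom{x}{j}$ on $\{0,\dots,p^k-1\}$. The functions $\binom{x}{j}$ with $j<p^k$ are $p^k$-periodic mod $p$. For negative arguments such as $\sum_i b_iv_i-\lambda$, Lucas as usually stated does not apply. Use instead $\binom{x+p^k}{j}=\sum_i\binom{p^k}{i}\binom{x}{j-i}\equiv\binom{x}{j}$, or replace $-\lambda$ by $p^k-\lambda$. Vandermonde together with the integrality of $\binom{bx}{m}$ in the basis $\binom{x}{l}$ then puts every row in the span of the $\binom{p^k-1+n}{n}$ functions $\prod_i\binom{v_i}{l_i}$ with $\sum_i l_i\le p^k-1$. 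The paper itself gives no proof of this theorem; it quotes it from \cite{LabaTrainor2025GeneralizedPolynomials}.

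\textbf{The gap.} The statement asserts the minimum of two bounds. For the second term, $2n\binom{\lfloor p^k/2\rfloor+(n-1)(p-1)+n}{n}$, you give no argument, only guesses about where $2n$, $p^k/2$ and $(n-1)(p-1)$ might come from. As written, the proposal proves only the first bound. To establish the statement you must either construct the second bound or explicitly invoke the citation for it.

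\textbf{Why your sketch cannot work.} The route you outline is to split the rows into $2n$ pieces and represent each row by a polynomial of binomial-weighted degree about $D=\lfloor p^k/2\rfloor+(n-1)(p-1)$ in reparametrized variables. This fails whenever $D<p^k-1$, which is exactly when the second term beats the first.
\begin{itemize}
\item Let $P_D$ be the span of the functions $\prod_i\binom{v_i}{l_i}$ with $\sum_i l_i\le D$. This space is invariant under integer affine changes of variables, since an integer-valued polynomial of degree $\le D$ composed with such a map is again one, and periodicity handles reduction mod $p^k$. So affine reparametrization does not help.
\item Take any row $(b,\lambda)$ and a coordinate $i$ with $b_i$ a unit. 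On a line $\{c+te_i\}$ with $c_i=0$, the row restricts to the indicator of a single point $t_0$.
\item The coefficient of that indicator on $\binom{t}{p^k-1}$ is $\pm\binom{p^k-1}{t_0}$, which is $\not\equiv 0\pmod p$ by Lucas.
\item Every element of $P_D$ restricts to the span of $\binom{t}{l}$, $l\le D$. These functions, $0\le l\le p^k-1$, form a unitriangular basis, so the expansion is unique.
\end{itemize}
Hence no single row of $A(p^k,n)$ lies in $P_D$ for any $D<p^k-1$. Your suggested products $\binom{x}{j}\binom{-x}{j'}$ do not escape this, since they still lie in $P_{j+j'}$.

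\textbf{What is missing.} A proof of the second bound must write each individual row as a sum of pieces lying in several different low-dimensional subspaces, or split the columns and bound each column block separately. Supplying such a decomposition is the missing idea.
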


It is instructive to  think of these bounds as having a leading term of the  form $C(p,k,n) \cdot p^{kn}$ (say, when $n$ is growing) as  the number of distinct hyperplanes $H_{b,\lambda}$ grows asymptotically like $\sim p^{kn}$. Both bounds above have $C(p,k,n)$ vanishing with  $n$ but independent of $k$. Our main result below shows an upper bound with $C(p,k,n)$ which goes to zero with $k$. The proof is via a connection established in \cite{DharDvir2021SquarefreeKakeya} between the rank of incidence matrices and the size of the smallest Kakeya sets in $R^n$. A Kakeya set is a set $S \subset R^n$ containing a line in every direction $b \in \P(R)^{n-1}$. In \cite{DharDvir2021SquarefreeKakeya} it was shown that the size of the smallest such $S$ is bounded from below by the rank of a matrix  $W^*(p^k,n)$ which is obtained from $A(p^k,n)$ by taking only rows corresponding to hyperplanes passing through the origin (i.e., whose index is $(b,0)$ for $b \in \P(R)^{n-1}$). We observe that one can tweak this reduction to prove the same result for $A(p^k,n)$ and then plug in the size of the smallest known Kakeya set in $R^n$ (due to Dhar \cite{Dhar2024KakeyaZN}) to prove the following theorem.

\begin{thm}\label{thm-main}
Let $p$ be prime. $k,n \geq 1$, $s \geq 0$ integers such that $k = \frac{p^{s+1}-1}{p-1}$. Then, 
$$ \rank_p( A(p^k,n) ) \leq \frac{p^{kn}}{k^{n-1}}(1 - 1/p)^{-n} $$
\end{thm}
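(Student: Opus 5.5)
The plan has two parts. First, show that for any Kakeya set $S \subset R^n$ we have $\rank_p(A(p^k,n)) \le |S|$. Second, plug in Dhar's construction \cite{Dhar2024KakeyaZN} of a small Kakeya set when $k = \frac{p^{s+1}-1}{p-1}$. That construction should have size at most $\frac{p^{kn}}{k^{n-1}}(1-1/p)^{-n}$; this is the step I would take from the literature, and I would check that its stated bound matches this form exactly.

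For the reduction, I would mimic the argument of \cite{DharDvir2021SquarefreeKakeya}. Consider the column space of $A = A(p^k,n)$, viewed as $\F_p$-valued functions on the index set $\P(R)^{n-1}\times R$. The column of $v$ is the function
\[ f_v(b,\lambda) = 1_{\ip{b}{v}=\lambda}. \]
It suffices to show that for any Kakeya set $S$ the columns $\{f_v : v \in S\}$ span all columns $f_u$, $u \in R^n$. Then
\[ \rank_p(A) = \text{column rank} \le |S|. \]

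A cleaner alternative is to work with the row space: rows are functions on $R^n$. Suppose the restriction map from the row space to functions on $S$ is injective. Then the rank is at most $|S|$. So the key claim is: if a linear combination $g = \sum c_{b,\lambda} 1_{H_{b,\lambda}}$ vanishes on $S$, then $g \equiv 0$ on $R^n$.

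To prove this I would use the structure of hyperplanes along lines. Take a line $L = \{a + t d : t \in R\}$ with direction $d \in \P(R)^{n-1}$. Each $1_{H_{b,\lambda}}$ restricted to $L$ is the indicator of the set $\{t : \ip{b}{a} + t\ip{b}{d} = \lambda\}$. This set is either empty, all of $R$, or a coset of an ideal $p^j R$ in $R$. So $g|_L$ lies in the span $V$ of indicators of cosets of ideals in $R$, i.e., functions $R \to \F_p$ of "digit-prefix" type.

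The idea in \cite{DharDvir2021SquarefreeKakeya} is a translation trick. For $W^*$ (hyperplanes through the origin) they use homogeneity along lines through the origin. For the full matrix $A$ I would use translation invariance instead: $A$'s row space is invariant under translations $v \mapsto v+a$. The tweak is to consider the functional that evaluates $g$ at the point $a$ versus values on the line through $a$ in direction $d$. The hope is to show that every $h \in V$ satisfies
\[ h(0) = \sum_{t} \mu_t\, h(t) \]
for weights $\mu$ supported on any given... (this is where I'd need care).

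I expect this reduction tweak to be the main obstacle: the statement that a function in the row space of $A$ vanishing on a Kakeya set vanishes everywhere. My first attempt would be to take a linear functional on functions on $R^n$ that kills the row space restricted away from $S$ and derive a contradiction using the line in $S$ with the relevant direction. If direct vanishing fails, I would instead bound the rank by $|S|$ via an explicit factorization $A = B C$ through $\F_p^{S}$, using the lines in $S$ to express each column as a combination of the columns indexed by $S$.

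Once the reduction holds, the theorem follows immediately from Dhar's bound on the smallest Kakeya set.
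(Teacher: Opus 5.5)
Your outer structure matches the paper's: first show $\rank_p(A(p^k,n))\le |S|$ for every Kakeya set $S$, then apply Dhar's construction. The second step is fine. The problem is the first step. It carries all the content, you leave it unproved, and the mechanism you propose for it is false once $k\ge 2$.

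Both of your formulations fail already for $p=k=n=2$:
\begin{itemize}
\item a combination $g=\sum c_{b,\lambda}1_{H_{b,\lambda}}$ over $\F_p$ that vanishes on a Kakeya set vanishes identically;
\item equivalently (and this is what your fallback factorization needs), the columns of $A$ indexed by $S$ span its column space.
\end{itemize}
For $(x,y)\in(\Z/4\Z)^2$ write $x=x_0+2x_1$ and $y=y_0+2y_1$. The high bit of $x+y \bmod 4$ is $x_1+y_1+x_0y_0 \bmod 2$, so over $\F_2$
\[ 1_{H_{(1,0),2}}+1_{H_{(1,0),3}}+1_{H_{(0,1),2}}+1_{H_{(0,1),3}}+1_{H_{(1,1),2}}+1_{H_{(1,1),3}} = x_1+y_1+(x_1+y_1+x_0y_0)=x_0y_0 . \]
This is the indicator that both coordinates are odd, a nonzero element of the row space. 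Yet it vanishes on $S=\{(x,y): x\text{ or }y\text{ even}\}$, and $S$ is a Kakeya set. It contains the lines $\{(t,0)\}$, $\{(0,t)\}$, $\{(t,t+1)\}$, $\{(t,1+3t)\}$, $\{(t,2t)\}$ and $\{(2t,t)\}$, one in each of the six directions of $\P(\Z/4\Z)^{1}$.

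So the columns indexed by $S$ do not span the column space, and no argument of the form ``vanishing on a Kakeya set forces vanishing everywhere'' can be completed. For $k=1$ your claim is true: the row space consists of polynomials of degree at most $p-1$, and this is Dvir's polynomial method. It is the carries in $\Z/p^k\Z$ that break it.

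The missing idea is to leave characteristic $p$. Work over $\C$ with $\gamma$ a primitive $p^k$-th root of unity. Fourier inversion along the line $L_b=\{u_b+tb\}\subset S$ gives $\gamma^{\ip{u_b}{y}}\,1[\ip{b}{y}=\lambda]=\frac{1}{p^k}\sum_{t\in R}\gamma^{-\lambda t}\gamma^{\ip{u_b+tb}{y}}$.
\begin{itemize}
\item Each row of the matrix $M$ of these twisted indicators is therefore a combination of the $|S|$ characters $y\mapsto\gamma^{\ip{x}{y}}$, $x\in S$, so $\rank_\C(M)\le|S|$.
\item The coefficient $\gamma^{-\lambda t}$ is the paper's only tweak of the Dhar--Dvir argument: it selects level $\lambda$ instead of $0$.
\item Since $M|_{\gamma=1}=A(p^k,n)$, the Dhar--Dvir lemma $\rank_\C(M)\ge\rank_p(M|_{\gamma=1})$ gives $\rank_p(A)\le|S|$. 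The lemma holds because reducing $\Z[\gamma]$ modulo the prime $(1-\gamma)$, whose residue field is $\F_p$ and where $\gamma\equiv 1$, cannot make a vanishing minor nonzero.
\end{itemize}
The factor $1/p^k$ is not integral at $(1-\gamma)$, so this factorization does not descend to a factorization over $\F_p$ through the columns in $S$. That is consistent with the counterexample: the rank bound survives reduction even though your spanning statement does not.
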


We comment that one can in fact prove a bound for all $k$'s (not just of the given form) but this requires going into Dhar's construction and gives a somewhat cumbersome dependence on $k$.

For comparison, the best known lower bound on the $\F_p$-rank of $A(p^k,n)$ is obtained by reduction to the rank of yet another matrix $W(p^k,n)$ whose rank was originally lower bounded by Arsovski \cite{Arsovski2024padicKakeya} (see  \cite{Dhar2023BeyondPolynomialMethodThesis} for an improved bound\footnote{The rank bound appeared in an early draft of \cite{Arsovski2024padicKakeya} and does not appear in the final  version.}). The matrix $W(p^k,n)$ is the same as $W^*(p^k,n)$ (hyperplanes through the origin) but with all $b \in R^n$ (not just those with at least one unit coordinate).
\begin{thm}\cite{Arsovski2024padicKakeya,Dhar2023BeyondPolynomialMethodThesis}
Let $p$ be prime. $k,n \geq 1$ integers. Then
$$ \rank_p( W(p^k,n) )  \geq { \lceil \frac{p^k}{k} \rceil + n-1 \choose n-1} $$
\end{thm}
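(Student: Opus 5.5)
The count $\binom{m+n-1}{n-1}$ with $m=\lceil p^k/k\rceil$ is the dimension of the space $V_m$ of homogeneous polynomials of degree $m-1$ in $n$ variables over $\F_p$ (equivalently, of polynomials of degree $<m$ in $n-1$ variables). So the plan is to exhibit $V_m$, or an isomorphic copy, inside the $\F_p$-column space of $W(p^k,n)$, viewed as functions of the row index $b\in R^n$. The column indexed by $v$ is the function $b\mapsto \delta(\ip{b}{v})$, where $\delta(x)=1$ iff $x\equiv 0 \bmod p^k$. The proof would then come down to an injectivity statement: a nonzero $P\in V_m$ gives a nonzero column combination.

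\textbf{Step 1 (digit expansion of $\delta$).} By Lucas' theorem, the $j$-th base-$p$ digit of $x\in\{0,\dots,p^k-1\}$ equals $\binom{x}{p^j} \bmod p$. Hence, as $\F_p$-valued functions,
$$\delta(x)=\prod_{j=0}^{k-1}\Bigl(1-\tbinom{x}{p^j}^{p-1}\Bigr).$$
I would use this to write $\delta(\ip{b}{v})$ as an $\F_p$-combination of functions $\binom{\ip{b}{v}}{e}$ with $e<p^k$. Here $\ip{b}{v}$ is computed in $\Z$ on representatives and then reduced, which is legitimate because each $\binom{x}{e}$ with $e<p^k$ is $p^k$-periodic mod $p$.

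\textbf{Step 2 (Dhar--Arsovski extraction).} I would take $\F_p$-linear combinations of columns along scalings $v\mapsto tv$ with $t\in R$, weighted by functions of $t$. The aim is to isolate, for each $v$, a component of $\delta(\ip{b}{v})$ that behaves like $\ip{b}{v}^{\,m-1}$, up to lower-order binomial terms. The factor $k$ in $\lceil p^k/k\rceil$ should come from a pigeonhole step: $\delta$ is a product of $k$ digit-factors of total degree $k(p-1)$ in the digit variables, and one must select a block of roughly $p^k/k$ consecutive binomial degrees on which the combinations act like genuine monomials. Concretely, I would pass to a $p$-adic lift (Teichmüller representatives in $\Z_p$) and use Nakayama-type reasoning: the $\F_p$-rank of $W$ is at least the rank over $\Q_p$ of a suitable matrix after dividing out a fixed power of $p$. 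This is Arsovski's mechanism, with Dhar's refinement giving the exact binomial.

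\textbf{Step 3 (linear independence).} Granting Step 2, the span of the functions $b\mapsto \ip{b}{v}^{m-1}$, as $v$ ranges over $R^n$ (or its Teichmüller lifts), is all of $V_m$. This holds because powers of linear forms span homogeneous polynomials of a given degree whenever the coordinate set is large enough, and here $m\le p^k$. It then remains to check that distinct elements of $V_m$ stay distinct when evaluated on $b\in R^n$ and reduced mod $p$. I would argue this by a Combinatorial Nullstellensatz/Schwartz--Zippel-type count on a grid of side $m\le p^k$ in $\{0,\dots,p^k-1\}^n$, after controlling the denominators introduced in the lift.

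The main obstacle is Step 2: making the passage from $\delta$ to degree-$(m-1)$ powers precise mod $p$ without the $p$-adic valuations destroying independence. That is where the loss from $p^k$ to $p^k/k$ is forced. I would first try to carry out Step 2 by induction on $k$, peeling off one base-$p$ digit at a time.
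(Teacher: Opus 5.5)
The paper does not prove this statement; it quotes it from Arsovski and Dhar. Your proposal therefore has to stand on its own, and it does not. Step~1 is fine. Step~2, which you rightly call the heart of the matter, describes a hoped-for mechanism rather than giving an argument, and the concrete ingredients you name would fail.

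\emph{Scalings.} Weighted sums of columns along scalings $v\mapsto tv$ are vacuous. The column of $tv$ is $b\mapsto\delta(t\ip{b}{v})$. This equals the column of $v$ when $t$ is a unit, and the column of $p^jv$ when $t=p^ju$ with $u$ a unit. So every such sum is a combination of the $k+1$ columns indexed by $v,pv,\dots,p^kv$, and each of these depends only on the $p$-adic valuation of $\ip{b}{v}$. Nothing ``behaving like $\ip{b}{v}^{m-1}$'' can be isolated this way. The extraction in Lemma~\ref{lem-rankreduce} works only because $\C$ has primitive $p^k$-th roots of unity, which $\F_p$ lacks, and it gives upper, not lower, bounds on $\rank_p$.

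\emph{Nakayama step.} This points the wrong way. Reduction mod $p$ can only lower rank, and from $WY=p^cZ$ nothing follows about $\rank_p W$ (take $W=pZ$). A lower bound needs a minor of $W$ itself that is a $p$-adic unit.

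\emph{Step~3.} This has a structural flaw. If $P$ has $p$-integral coefficients, then $b\mapsto P(b)\bmod p$ depends only on $b\bmod p$. All such functions therefore span at most $p^n$ dimensions, far below $\binom{m+n-2}{n-1}$ for $n\ge 2$ and large $k$. Also, in characteristic $p$ the $d$-th powers of linear forms do not span the degree-$d$ forms once $d\ge p$ (e.g.\ $(x+y)^p=x^p+y^p$), however large the grid. To see the higher digits of $b$ you must work with integer-valued functions such as $\binom{b}{\alpha}=\prod_i\binom{b_i}{\alpha_i}$, $0\le\alpha_i<p^k$, which form an $\F_p$-basis of all functions $R^n\to\F_p$. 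The denominators you propose to ``control'' are exactly where the content of the proof lies, including the $1/k$ loss.

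\emph{The count.} Your opening count is off by one. $\binom{m+n-1}{n-1}$ is the dimension of homogeneous forms of degree $m$ (not $m-1$) in $n$ variables, equivalently of polynomials of degree $\le m$ in $n-1$ variables; degree $m-1$ gives $\binom{m+n-2}{n-1}$. This matters, because the bound as printed fails at $k=1$. There $\delta(\ip{b}{v})=1-\ip{b}{v}^{p-1}$ in $\F_p$, so the row space of $W(p,n)$ is the constants plus all forms of degree $p-1$ (reduced as functions, since $p-1<p$). Hence $\rank_p W(p,n)=1+\binom{p+n-2}{n-1}$, which is smaller than $\binom{p+n-1}{n-1}$ for $n\ge 3$. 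For instance, with $p=2$, $n=3$ the rows $1+\ip{b}{v}$ span a $4$-dimensional space, while $\binom{4}{2}=6$. So at $k=1$ your $V_m$ is exactly the non-constant part of the row space, and your target $\binom{m+n-2}{n-1}$ is consistent with the truth there. It is not, however, the bound stated, and you should have flagged this mismatch rather than asserting that the two counts agree.
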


The relation between the ranks of $W(p^k,n)$ and $A(p^k,n)$ was established in \cite{LabaTrainor2025GeneralizedPolynomials}. 
\begin{thm}\cite{LabaTrainor2025GeneralizedPolynomials}
	Let $p$ be prime. $k,n \geq 1$ integers. Then
	$$ \rank_p( A(p^k,n) ) \geq \frac{1}{2k(k+1)} \cdot \rank_p( W(p^k,n+1) ) - 1 \geq \frac{1}{2k(k+1)}{ \lceil \frac{p^k}{k} \rceil + n \choose n} - 1. $$
\end{thm}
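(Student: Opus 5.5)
The plan is to compare the two matrices directly. We bound $\rank_p(W(p^k,n+1))$ from above by a sum of boundedly many terms, each at most $\rank_p(A(p^k,n))+1$. The second inequality is then the Arsovski--Dhar bound (the theorem cited above) applied in dimension $n+1$. That gives $\binom{\lceil p^k/k\rceil+n}{n}$ as a lower bound for $\rank_p(W(p^k,n+1))$, so it is immediate once the first inequality is established. Recall that $W(p^k,n+1)$ has rows indexed by all $b\in R^{n+1}$ and columns by $v\in R^{n+1}$, with entry $1$ iff $\ip{b}{v}=0$.

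\textbf{Step 1 (splitting columns by valuation).} Write each nonzero column index as $v=p^i v'$ with $0\le i\le k-1$ and $v'$ having a unit coordinate. The column $v=0$ is the all-ones column and contributes rank at most $1$; this is the source of the $-1$. Since $\ip{b}{v}=0$ iff $\ip{b}{v'}\in p^{k-i}R$, and this condition is invariant under scaling $v'$ by units, columns that differ only by a unit multiple of $v'$ are identical. Hence the rank of the valuation-$i$ block equals the rank of the submatrix with columns $v'\in\P(R)^{n}$ and entries $1_{\ip{b}{v'}\in p^{k-i}R}$. By subadditivity of rank over column blocks, $\rank_p W\le 1+\sum_{i<k}\rank_p(B_i)$.

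\textbf{Step 2 (reduction to hyperplanes in $R^n$).} Take first the columns $v'$ whose last coordinate is a unit; normalize it to $1$ and write $v'=(u,1)$ with $u\in R^n$. The entry becomes $1$ iff $\ip{c}{u}\equiv -b_{n+1} \pmod{p^{k-i}}$, where $c=b_{[n]}$. Write $c=p^jc'$ with $c'$ primitive (or $c=0$). The corresponding row is then the indicator of a union of hyperplanes $H_{c',\lambda}$ over an explicit coset of $\lambda$'s, or a constant vector when $c=0$ or $j\ge k-i$. In either case every row lies in the $\F_p$-span of the rows of $A(p^k,n)$ together with the all-ones vector. So this piece of $B_i$ has rank at most $\rank_p(A(p^k,n))+1$.

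\textbf{Step 3 (remaining columns; the main obstacle).} The remaining columns of $B_i$ have non-unit last coordinate, so $v'=(u',pt)$ with $u'$ primitive in $R^n$. For these the entry condition $\ip{b_{[n]}}{u'}+pt\,b_{n+1}\in p^{k-i}R$ is again an affine condition in $u'$. The plan is to treat them by peeling off one $p$-adic digit of the last coordinate at a time. For each fixed $p$-adic level of $t$ (at most $k$ levels), one re-parametrizes so that the columns are points of $R^n$ and the rows are again indicators of unions of hyperplanes in $R^n$ plus constants. Each level then costs at most $\rank_p(A(p^k,n))+1$. Counting the unit-last-coordinate chart together with these levels (with a factor $2$ to absorb the separate handling of the cases $c=0$ versus $c\ne 0$ and of the additive constant rows) gives at most $2k$ pieces per valuation class $i$. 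Combined with the $k+1$ valuation classes, this gives
\[
\rank_p W(p^k,n+1)\le 2k(k+1)\bigl(\rank_p A(p^k,n)+1\bigr),
\]
which after rearranging yields the first inequality. The delicate point, and what I expect to be the main obstacle, is this step: showing that the non-unit-last-coordinate columns can be covered by $O(k)$ charts rather than the naive $n+1$ coordinate charts. The naive charts would give a dimension-dependent constant. I would first try to use the valuation of the last coordinate as the chart parameter, exploiting that $p^{k}=0$ bounds the number of levels by $k$.
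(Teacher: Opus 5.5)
The paper only quotes this result from Laba--Trainor and gives no proof, so your argument has to stand on its own. Your treatment of the second inequality, Step 1 and Step 2 is fine. The gap is Step 3, which carries all the content of the first inequality: it is a plan rather than a proof, and the count ``$2k$ pieces per class'' is asserted to match the target constant rather than derived.

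Concretely, fix a $p$-adic level of the last coordinate $pt$. The condition $\ip{c}{u'}+pt\,b_{n+1}\in p^{k-i}R$ still depends on the unit part of $t$. So the rows restricted to that level are functions of the pair $(u',t)$, not indicators of unions of hyperplanes in a single variable from $R^n$. You never say what removes this dependence, and fixing $t$ itself would cost up to $p^{k-1}$ pieces, not $k$.

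The ``factor $2$'' is also not a rank argument. Case distinctions among \emph{rows} ($c=0$ versus $c\neq 0$) never split a column block. Constant rows already lie in the row space of $A(p^k,n)$, because $\sum_{\lambda\in R}1_{H_{b,\lambda}}$ is the all-ones vector. (Also, your Step 1 produces $k$ classes plus the zero column, not $k+1$ classes.)

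The missing idea is that columns of $W$ are invariant under scaling the index by a unit. If $v_{n+1}=p^m w$ with $w$ a unit, then column $v$ equals column $w^{-1}v=(w^{-1}v_{[n]},p^m)$, since $\ip{b}{v}=0$ iff $\ip{b}{w^{-1}v}=0$.

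So group the columns of $W(p^k,n+1)$ directly by $m=v_p(v_{n+1})\in\{0,\dots,k\}$, with $m=k$ meaning $v_{n+1}=0$. The distinct columns in group $m$ are those of the matrix $N_m$ with rows $(c,\beta)\in R^n\times R$, columns $x\in R^n$, and entry $1$ iff $\ip{c}{x}=-p^m\beta$. Each row of $N_m$ is one of three things:
\begin{itemize}
\item empty;
\item all of $R^n$;
\item writing $c=p^j\alpha c'$ with $\alpha$ a unit and $c'\in\P(R)^{n-1}$, the disjoint union of the $H_{c',\lambda}$ over $\lambda$ in a coset of $p^{k-j}R$.
\end{itemize}
Hence every row lies in the $\F_p$-row space of $A(p^k,n)$, so $\rank_p N_m\le\rank_p A(p^k,n)$, and
\[
\rank_p W(p^k,n+1)\le (k+1)\,\rank_p A(p^k,n).
\]
This is stronger than the stated inequality, and it makes Step 1, the $+1$ and the factor $2$ unnecessary.

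Your closing instinct, to use the valuation of the last coordinate as the chart parameter, was right; the unit rescaling is what makes it work. The same rescaling also repairs Step 3 inside your Step 1 framework. It gives $k+1$ pieces per class $i$, each of rank at most $\rank_p A(p^k,n)$, hence $\rank_p W(p^k,n+1)\le 1+k(k+1)\rank_p A(p^k,n)$.
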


\section{Proof of Theorem~\ref{thm-main}}

\begin{define}
Let $\gamma$ be a complex primitive $p^k$th root of unity for prime $p$ and natural number $k$.
Let $M$ be a complex matrix with entries in $\{0,1,\gamma,\hdots,\gamma^{p^k-1}\}$. We define $ M|_{\gamma=1}$ to be an integer matrix of the same dimensions of $M$ and with entries
$$  (M|_{\gamma=1})_{ij} = \left\{
	\begin{array}{ll}
		0  & \mbox{if } M_{i,j}=0 \\
		1 & \mbox{otherwise} 
	\end{array}
\right. $$	
\end{define}

We will  need the following lemma from \cite{DharDvir2021SquarefreeKakeya}.
\begin{lem}\label{lem:rankCtoF}\cite{DharDvir2021SquarefreeKakeya}
Let $\gamma$ be a complex primitive $p^k$th root of unity for prime $p$ and natural number $k$. Let $M$ be a complex matrix with entries in $\{0,1,\gamma,\hdots,\gamma^{p^k-1}\}$. Then we have,
\em{$$\text{rank}_{\C}(M)\ge \text{rank}_{p}(M|_{\gamma=1}).$$}
\end{lem}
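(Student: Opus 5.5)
The plan is to pass through the ring $\Z[\gamma]$ and reduce modulo the prime ideal generated by $1-\gamma$; this turns $\gamma$ into $1$ and integers into their residues mod $p$. Concretely, let $r = \rank_p(M|_{\gamma=1})$ and choose an $r \times r$ submatrix of $M|_{\gamma=1}$, with row set $I$ and column set $J$, whose determinant is nonzero in $\F_p$. I will show that the corresponding $r\times r$ submatrix $M_{I,J}$ of $M$ has nonzero complex determinant. This gives $\rank_\C(M) \geq r$ directly.

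The first step is to construct a ring homomorphism $\phi: \Z[\gamma] \to \F_p$ with $\phi(\gamma)=1$. Since $\gamma$ is a root of the cyclotomic polynomial
\[ \Phi_{p^k}(x) = 1 + x^{p^{k-1}} + x^{2p^{k-1}} + \cdots + x^{(p-1)p^{k-1}}, \]
we have $\Z[\gamma] \cong \Z[x]/(\Phi_{p^k}(x))$. This uses the irreducibility of $\Phi_{p^k}$ over $\Q$ and Gauss's lemma, since $\Phi_{p^k}$ is monic. Consequently
\[ \Z[\gamma]/(1-\gamma) \cong \Z[x]/(\Phi_{p^k}(x), x-1) \cong \Z/(\Phi_{p^k}(1)) = \Z/p\Z = \F_p. \]
Let $\phi$ be the resulting quotient map. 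It sends $\gamma^j \mapsto 1$ for every $j$ and $0 \mapsto 0$. Applied entrywise to $M$, it therefore yields exactly the reduction mod $p$ of $M|_{\gamma=1}$.

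The second step uses the fact that the determinant is a polynomial with integer coefficients in the entries, so it commutes with ring homomorphisms. Hence
\[ \phi(\det M_{I,J}) = \det\bigl(\phi(M_{I,J})\bigr) = \det\bigl((M|_{\gamma=1})_{I,J}\bigr) \neq 0 \text{ in } \F_p. \]
It follows that $\det M_{I,J} \neq 0$ as an element of $\Z[\gamma] \subset \C$, so $M$ has a nonsingular $r\times r$ minor over $\C$.

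The only nontrivial point is the identification $\Z[\gamma]/(1-\gamma)\cong \F_p$, and specifically that this quotient is not the zero ring. The evaluation $\Phi_{p^k}(1)=p$ handles this. Alternatively, one can avoid irreducibility entirely. Take any ring homomorphism $\Z[\gamma]\to \overline{\F}_p$ extending $\Z\to\F_p$; one exists because $\Z[\gamma]$ is integral over $\Z$, so some prime ideal lies over $(p)$. Such a homomorphism sends $\gamma$ to a $p^k$-th root of unity in characteristic $p$, which must equal $1$, since $x^{p^k}-1=(x-1)^{p^k}$ there. The rest of the argument is then unchanged.
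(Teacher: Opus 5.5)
Your proof is correct, and your alternative via a prime of $\Z[\gamma]$ lying over $(p)$ also works. The paper does not prove this lemma itself but cites it from Dhar--Dvir, where the argument is essentially yours in polynomial language: write $\det M_{I,J}=P(\gamma)$ with $P\in\Z[x]$; if this vanished, the monic $\Phi_{p^k}$ would divide $P$ in $\Z[x]$, so $p=\Phi_{p^k}(1)$ would divide $P(1)=\det\bigl((M|_{\gamma=1})_{I,J}\bigr)$, a contradiction.
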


The following lemma is obtained by modifying the argument in \cite{DharDvir2021SquarefreeKakeya} slightly.
\begin{lem}\label{lem-rankreduce}
Suppose $S \subset R^n$ is a Kakeya set. Then
$$ \rank_p( A(p^k,n) ) \leq |S|. $$
\end{lem}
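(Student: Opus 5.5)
The plan is to build a complex matrix $M$, indexed exactly like $A(p^k,n)$, whose rank is at most $|S|$ by construction and for which $M|_{\gamma=1}=A(p^k,n)$. Lemma~\ref{lem:rankCtoF} then gives
$$\rank_p(A(p^k,n)) = \rank_p(M|_{\gamma=1}) \le \rank_{\C}(M) \le |S|.$$
As in \cite{DharDvir2021SquarefreeKakeya}, we write $M$ as a product of two matrices whose inner dimension is indexed by the points of $S$. The one change from that argument is how the shift $\lambda$ is absorbed: we put a twist by $\gamma^{-t\lambda}$ along the Kakeya line rather than handling it on the column side.

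\emph{Construction.} Since $S$ is Kakeya, for each $b\in\P(R)^{n-1}$ fix $a_b\in R^n$ with $L_b=\{a_b+tb : t\in R\}\subseteq S$. Because $b$ has a unit coordinate, the map $t\mapsto a_b+tb$ is injective, so each point of $L_b$ corresponds to exactly one $t$.
\begin{itemize}
\item Let $P$ have rows indexed by $\P(R)^{n-1}\times R$ and columns by $S$. Its entries are $P_{(b,\lambda),x}=\gamma^{-t\lambda}$ if $x=a_b+tb\in L_b$, and $0$ if $x\notin L_b$.
\item Let $Q$ have rows indexed by $S$ and columns by $R^n$, with entries $Q_{x,v}=\gamma^{\ip{v}{x}}$.
\end{itemize}
Then $M=PQ$ has rank at most $|S|$.

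\emph{Computing the entries.} For each row $(b,\lambda)$ and column $v$ we get
$$M_{(b,\lambda),v}=\sum_{t\in R}\gamma^{\ip{v}{a_b}+t(\ip{v}{b}-\lambda)}=\gamma^{\ip{v}{a_b}}\sum_{t\in R}\gamma^{t(\ip{b}{v}-\lambda)}.$$
Here $\gamma^{\ip{v}{x}}$ is well defined because $\gamma$ has order $p^k$. The inner character sum over $R=\Z/p^k\Z$ equals $p^k$ if $\ip{b}{v}=\lambda$ and $0$ otherwise. So $M_{(b,\lambda),v}=p^k\gamma^{\ip{v}{a_b}}$ on the incidences of $A(p^k,n)$ and $0$ elsewhere.

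\emph{Normalizing.} Dividing by $p^k$ does not change the complex rank, so $M'=p^{-k}M$ still has rank at most $|S|$. Its entries lie in $\{0,1,\gamma,\dots,\gamma^{p^k-1}\}$, and $M'|_{\gamma=1}=A(p^k,n)$ exactly. Applying Lemma~\ref{lem:rankCtoF} to $M'$ finishes the proof.

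The only delicate point is choosing which side carries which index. The Kakeya lines must run in the directions $b$, which range only over $\P(R)^{n-1}$ and so are covered by the hypothesis. The points $v$, which range over all of $R^n$, enter only through the characters $\gamma^{\ip{v}{x}}$, so no lines in non-unit directions are ever needed. The remaining check is the character-sum evaluation above.
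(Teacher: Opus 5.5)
Your proposal is correct and is essentially the paper's proof. It uses the same factorization through the points of $S$: the twist $\gamma^{-t\lambda}$ sits along each Kakeya line $L_b$, the matrix $\gamma^{\ip{v}{x}}$ sits on the other side, and the character-sum evaluation is followed by Lemma~\ref{lem:rankCtoF}. The only cosmetic difference is that the paper puts the factor $\frac{1}{p^k}$ into the left matrix $B$, whereas you rescale $M$ afterwards, which does not change the rank.
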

\begin{proof}
Let $\gamma$ be a complex primitive $p^k$th root of unity.
Let $F$ be a complex matrix	 whose rows are indexed by $S \subset R^n$ and columns indexed by $R^n$ with entries 
$$ F_{x,y} = \gamma^{\ip{x}{y}} ,$$
where we identify the set $R$ with the subset $\{0,1,\ldots,p^k-1\} \subset \Z$.

For each $b \in \P(R)^{n-1}$ let 
$$ L_b = \{ u_b + t b \,|\, t \in R \} \subset S$$
be a line in direction $b$ contained in $S$. For each $b \in \P(R)^{n-1}$ and $\lambda \in R$ we define a function $$\phi_{b,\lambda} : S \mapsto \C$$ as follows: For all $x \not\in L_b$ we set  $\phi_{b,\lambda}(x) = 0$ . If $x \in L_b$ then let $t \in R$ be such that $x = u_b + t b$ and set $$\phi_{b,\lambda}(u_b + t b)= \frac{1}{p^k}\gamma^{-\lambda t}.$$ Notice that $t$ is uniquely determined since $b$ has at least one unit coordinate. Finally, we let  $B$ be a complex matrix with rows indexed by $\P(R)^{n-1} \times R$ and columns indexed by $S$ whose rows are given by the various $\phi_{b,\lambda}$. More formally, the entry of $B$ in position $((b,\lambda),x)$ is $\phi_{b,\lambda}(x)$. 

We now define $$ M = B \cdot F $$ which trivially has $\rank_\C( M) \leq |S|.$  The entry of $M$ in position $((b,\lambda),y)$ is given by
\begin{eqnarray*}
	M_{(b,\lambda),y} &=& \sum_{x \in S}\phi_{b,\lambda}(x) \gamma^{\ip{x}{y}} \\
	&=&  \frac{1}{p^k}\sum_{t \in R}\gamma^{-\lambda t} \gamma^{\ip{u_b + tb}{y}}\\
   &=& \gamma^{\ip{u_b}{y}}\frac{1}{p^k}\sum_{t \in R}\gamma^{t (\ip{b}{y} - \lambda)} \\
   &=& \gamma^{\ip{u_b}{y}} \cdot 
   {\begin{cases}
1 &  \ip{b}{y}=\lambda \mod p^k \\
0  & \text{otherwise} 
\end{cases}}
\end{eqnarray*}
Hence, we see that $M|_{\gamma=1} = A(p^k,n)$ and so, by Lemma~\ref{lem:rankCtoF}, we have
$$ \rank_p( A(p^k,n) ) \leq \rank_{\C}( M) \leq |S| .$$
\end{proof}

Combining Lemma~\ref{lem-rankreduce} with the following construction of Dhar, completes the proof of Theorem~\ref{thm-main}. 

\begin{thm}[\cite{Dhar2024KakeyaZN}]
	Let $p$ be prime. $k,n \geq 1$ , $s \geq 0$ integers such that $k = \frac{p^{s+1} - 1}{p-1}$. Then, there exists a Kakeya set $S \subset R^n$ with 
	$$ |S| \leq \frac{p^{kn}}{k^{n-1}}(1 - 1/p)^{-n}. $$
	
\end{thm}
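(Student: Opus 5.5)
The plan is to give an explicit construction, reducing the $n$-dimensional problem to choosing a single function $g: R \to R$.

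\textbf{Step 1: split directions by first unit coordinate.} Every $b \in \P(R)^{n-1}$ has a unique first unit coordinate $i$, and after scaling we may take $b_i = 1$ and $b_j \in pR$ for $j<i$. For each $i$ I will build a set $S_i$ containing a line in every direction of class $i$, and set $S=\bigcup_i S_i$.

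\textbf{Step 2: the product ansatz for class $1$.} Directions in class $1$ are $(1,m_2,\ldots,m_n)$ with $m_j\in R$ arbitrary. I will use lines
\[
\{(t,\; tm_2+g(m_2),\; \ldots,\; tm_n+g(m_n)) : t\in R\}
\]
for a single function $g:R\to R$ to be chosen. Put $T_t=\{tm+g(m): m\in R\}$. Then the union of these lines lies in $\{(t,y): y_j\in T_t \text{ for all } j\}$, so
\[
|S_1|\le \sum_{t\in R}|T_t|^{n-1}.
\]
The problem is now one-dimensional. We need $g$ for which $|T_t|$ is roughly $p^k/k$ up to a constant factor, for most $t$.

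\textbf{Step 3: classes $i\ge 2$.} For class $i$, the coordinates before position $i$ range only over $pR$. Applying the same ansatz with $t$ in coordinate $i$ gives $p^{-(i-1)}$ times a bound of the same shape, because those coordinates take values in a set of relative size about $1/p$. Summing over $i$ gives a geometric factor of at most $(1-1/p)^{-1}$. The remaining factor $(1-1/p)^{-(n-1)}$ should come from the per-coordinate bound in Step 4, which should read $|T_t|\lesssim \frac{p^k}{k}(1-1/p)^{-1}$ on average.

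\textbf{Step 4: choosing $g$ (the main obstacle).} This is where the hypothesis $k=1+p+\cdots+p^s$ must enter. In the finite-field case one takes $g(m)=-m^2/4$, and each $T_t$ has size about $q/2$. Over $\Z/p^k\Z$ no single quadratic gives a $1/k$ saving. My first attempt is a $p$-adic generalization: write $m=\sum_\ell m_\ell p^\ell$, and let $g$ add correction terms at each digit level, mimicking $m^2$ and higher $p$-power analogues such as $m^{p^r}$-type terms. The goal is that when $v_p(t)=j$, the map $m\mapsto tm+g(m)$ collapses many digit patterns, so that $|T_t|$ is about $p^k/k$ uniformly in $j$. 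The special form $k=(p^{s+1}-1)/(p-1)$ suggests that the $k$ digit levels group into blocks of sizes $1,p,\ldots,p^s$. These blocks should be balanced so that each valuation class of $t$ gets the same saving.

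\textbf{Step 5: assembly.} Once the bound on $|T_t|$ is in hand, weight it by the number of $t$ with each valuation, which is about $p^k(1-1/p)p^{-j}$. Summing $|T_t|^{n-1}$ then gives $p^{kn}k^{-(n-1)}$ times the constant factors. Multiplying by the class-sum factor from Step 3 should yield exactly the stated bound. If the uniform bound on $|T_t|$ proves too strong, the fallback is an averaged bound on $\sum_t|T_t|^{n-1}$ directly, via a digit-by-digit recursion on $k$.
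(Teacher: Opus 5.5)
\textbf{There is a genuine gap: the construction is missing.} The paper does not prove this statement; it quotes it from Dhar. So your proposal has to supply the construction itself, and it does not. Steps 1--3 and 5 reduce the problem to choosing a single function $g:R\to R$. Step 4, where $g$ should be produced and analysed, does neither, and the hypothesis $k=(p^{s+1}-1)/(p-1)$ is never used.

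Your target is also too weak. In your ansatz $S_1$ is \emph{exactly} $\{(t,y): y_j\in T_t \text{ for all } j\}$, so $|S_1|=\sum_t|T_t|^{n-1}$. Since $n$ is arbitrary and the theorem allows exactly a factor $p^k/(k(1-1/p))$ per extra coordinate, you need $|T_t|\le p^k/(k(1-1/p))$ for essentially every $t$. ``Roughly $p^k/k$ up to a constant factor, for most $t$'' does not give this, since a constant $C$ becomes $C^{n-1}$. Your averaged fallback is no weaker, because for large $n$ the sum is governed by $\max_t|T_t|$.

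This requirement is extremely tight. Count pairs $(m,m')$ with $tm+g(m)=tm'+g(m')$ and apply Cauchy--Schwarz on the fibres. With $v_p(0):=k$ this gives
\[
\sum_{t\in R}\frac{1}{|T_t|}\;\le\;\frac{1}{p^{2k}}\sum_{m,m'\in R}p^{v_p(m-m')}\;=\;1+k(1-1/p),
\]
so near-equality is needed for every $t$ at once. This is where the hypothesis on $k$ can enter. Since $k(1-1/p)=p^s-1/p$, it would suffice to find $g$ for which every map $m\mapsto tm+g(m)$ is $p^s$-to-one, i.e.\ $|T_t|\le p^{k-s}$ for all $t\in R$. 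Within your framework, proving this is the real content of the theorem, and nothing in the proposal does it.

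Natural candidates fail:
\begin{itemize}
\item For odd $p$ and $g(m)=m^p$, the map $m\mapsto tm+m^p$ reduces to $(t+1)m$ mod $p$ and has derivative $\equiv t$. By Hensel it is a bijection of $R$ for every unit $t\not\equiv-1\pmod p$, so $|T_t|=p^k$.
\item For $p=2$ and $g(m)=m^2$ one gets $|T_t|=2^{k-1}$ for odd $t$. That suffices for $k=3$ but not for $k=7$.
\end{itemize}

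\textbf{Step 3 has a second, independent gap.} For class $i\ge2$, the first $i-1$ coordinates of your lines are $tb_j+g(b_j)$ with $b_j\in pR$, so they range over $\{tm+g(m):m\in pR\}$. You have conflated the parameter set with its image. The parameter set $pR$ has relative size $1/p$, but nothing forces its image to be $1/p$ the size of $T_t$: with $g(m)=m^2$ on $\Z/8\Z$ and $t$ odd, both sets have size $4$.

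The two fallbacks both fail:
\begin{itemize}
\item Bounding these coordinates by $|pR|=p^{k-1}$ makes the class-$n$ term exceed the target by a factor exponential in $n$ once $s\ge2$.
\item Bounding them by $|T_t|$ replaces the geometric factor $(1-1/p)^{-1}$ by $n$. If all you know is $|T_t|\le p^k/(k(1-1/p))$, the resulting estimate exceeds the stated bound as soon as $n>p/(p-1)$.
\end{itemize}
So you additionally need $|\{tm+g(m):m\in pR\}|\le p^{k-s-1}$, or separately designed offsets on these coordinates.

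If both estimates were proved, your skeleton would give $|S|\le(1-1/p)^{-1}p^k(p^{k-s})^{n-1}$, which is at most the stated bound. As written, both load-bearing estimates are missing.
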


We comment that one can adapt the proof in \cite{Dhar2024KakeyaZN} to work for any $k$ (by padding the coefficient in the construction with zeros) but the final bound obtained is a bit cumbersome to state.

\bibliographystyle{alpha}

\bibliography{IncidenceRank}

\end{document}